\newtheorem{theorem}{Theorem}
\newtheorem{conclusion}[theorem]{Conclusion}
\newtheorem{corollary}[theorem]{Corollary}
\newtheorem{lemma}[theorem]{Lemma}
\newtheorem{proposition}[theorem]{Proposition}
\newtheorem{remark}[theorem]{Remark}
\newenvironment{proof}[1][Proof]{\noindent\textbf{#1.} }{\ \rule{0.5em}{0.5em}}
\begin{document}

\title{A surprising regularizing effect of the nonlinear semigroup
associated to the semilinear heat equation and applications to reaction
diffusion systems}
\author{Said Kouachi \\
University of Abbes Laghrour Khenchela. Algeria.\\
E-mail: kouachi@univ-khenchela.dz}
\maketitle

\begin{abstract}
It is well known to prove global existence for the semilinear heat equation
via the well known regularizing effect, we need to show that the reaction is
uniformly bounded in the Lebesgue space $L^{\infty }(\left( 0,T_{\max
}\right) ,L^{p}(\Omega ))$ for some $p>n/2$ where $\left( 0,T_{\max }\right) 
$ and $\Omega $ \ are respectively the temporal and spatial spaces and $%
\Omega $ \ is an open bounded domain of $\mathbb{R}^{n}$. In this paper we
prove that if the reaction (even it depends on the temporal and the spatial
variables) preserves the same sign after some time $t_{0}\in \left(
0,T_{\max }\right) $, then the solution is global provided it belongs to the
space $L^{\infty }(\left( t_{0},T_{\max }\right) ,L^{1}(\Omega ))$. That is
if the sign of the reaction is preserved, then positive weak solutions for
quasilinear parabolic equations on bounded domains subject to homogenous
Neumann boundary conditions become classical and exist globally in time
independently on the nonlinearities growth. We apply this result to coupled
reaction diffusion systems and prove that weak solutions become classical
and global provided that the reactions become of constant sign after some
time $t_{0}\in \left( 0,T_{\max }\right) $ and belong to the space $%
L^{\infty }(\left( t_{0},T_{\max }\right) ,L^{1}(\Omega ))$. The
nonlinearities growth isn't taken in consideration. The proof is based on
the maximum principle.
\end{abstract}

We consider a semilinear Heat equation and corresponding reaction diffusion
systems. We give conditions which guarantee global existence of solutions
with positive initial data. The problems of global existence (and obviously
blow-up at finite time) of solutions for nonlinear parabolic equations and
systems with Neumann boundary conditions have been investigated extensively
by many authors (see e.g., \cite{Den-Lev}, \cite{Gal-Vaz}, \cite{Lev}, \cite%
{Zha}, \cite{Zha2006}, \cite{Qi}, \cite{Qi95}, \cite{Sam-Gal-Kur-Mik}, \ 
\cite{Den}, \cite{Din-Li}, \cite{Sou-Jaz-Mon}, \cite{Ish-Miz}, \cite{Ish-Yag}%
, \cite{Jaz-Kiw} and \cite{Miz} for equations and \cite{Ama 85}, \cite%
{Pie-Sch 22}, \cite{Kou(MMAS)}, \cite{Kou(DPDE)} and \cite{Sou 18} for
systems) and the references therein. We apply the results obtained for the
semilinear Heat equation to some reaction\ diffusion systems. We begin with
the following semi linear heat equation:

\begin{equation}
\left\{ 
\begin{array}{l}
\frac{\partial u}{\partial t}-a\Delta u=f(t,x,u)\text{\qquad \qquad in }%
\mathbb{R}^{+}\times \Omega , \\ 
\frac{\partial u}{\partial \eta }=0\text{\qquad \qquad on }\mathbb{R}%
^{+}\times \partial \Omega , \\ 
u(0,x)=u_{0}(x)>0,\qquad \text{in}\;\Omega ,%
\end{array}%
\right.  \label{Heat 1.1}
\end{equation}%
where $\dfrac{\partial }{\partial \eta }$ denotes the outward normal
derivative on the boundary $\partial \Omega $ of an open bounded domain $%
\Omega \subset \mathbb{R}^{n}$ of class $\mathbb{C}^{1}$ and $a$ is a
positive constant. Assume that the reaction $f$\ \ is continuously
differentiable and we have for some $t_{0}\in \left( 0,T_{\max }\right) $
the following%
\begin{equation}
f(t,x,u)\neq 0,\ \text{for all }t\in \left( t_{0},T_{\max }\right) ,\ x\in
\Omega ,\ u>0\text{.}  \label{Heat 1.7}
\end{equation}%
Assume that%
\begin{equation}
\ f(t,x,0)\geq 0,\ \text{for all}\ t\geq 0,\ x\in \Omega ,  \label{P}
\end{equation}%
which assures the positivity of the solution on$\ \left( 0,T_{\max }\right)
\times \Omega $ for all nonnegative initial data via standard comparison
arguments for parabolic equations (see e.g. D. Henry \cite{Hen} and its
references). It is known from the classical parabolic equation theory (see
e.g. \cite{Ama 85} and its references) that there exists a unique local
classical solution $u$ to problem\ (\ref{Heat 1.1}) defined on the interval $%
\left( 0,T_{\max }\right) $ where $T_{\max }$ denotes the eventual
blowing-up time in $\mathbb{L}^{\infty }(\Omega )$. Some forms of (\ref{Heat
1.1}) have been treated already. Lair and Oxley \cite{Lai-Oxl} considered
the following problem:

\begin{equation}
\left\{ 
\begin{array}{l}
\frac{\partial u}{\partial t}-\nabla .\left( a\left( u\right) \nabla
u\right) =f(u)\text{\qquad \qquad in }\mathbb{R}^{+}\times \Omega , \\ 
\frac{\partial u}{\partial \eta }=0\text{\qquad \qquad on }\mathbb{R}%
^{+}\times \partial \Omega , \\ 
u(0,x)=u_{0}(x)>0,\qquad \text{on}\;\Omega .%
\end{array}%
\right.  \label{Heat 1.2}
\end{equation}%
where the functions $a$ and $f$ \ are assumed to be nondecreasing,
nonnegative and satisfying%
\begin{equation*}
a\left( s\right) f(s)>0,\text{ for all }s>0,
\end{equation*}%
with $a(0)=0$. They proved the existence of global and blow-up of
generalized solutions (solutions that are limit sequence of certain
approximating problems). More precisely, they proved blow up at finite time
(and obviously global existence) if and only if%
\begin{equation}
\underset{0}{\overset{\infty }{\int }}\frac{ds}{1+f\left( s\right) }<+\infty
,  \label{b.u.}
\end{equation}%
which is equivalent (in the case of the blow up) to%
\begin{equation*}
\underset{\alpha }{\overset{\infty }{\int }}\frac{ds}{f\left( s\right) }%
<+\infty ,
\end{equation*}%
for some positive constant $\alpha $. A similar phenomenon occurs for the
homogenous Dirichlet problem (see \cite{Gal}, \cite{Lev} and their
references). When the reaction doesn't depend on $\left( t,x\right) $, using
comparison principles with the ODE associated with problem (\ref{Heat 1.1})
we can show easily global existence for $p<1$ and blow-up at finite time for 
$p>1$ for appropriate initial conditions. For homogeneous Robin boundary
conditions, the authors in \cite{Beb}, \cite{Bel} and \cite{Lac} considered
the following problem%
\begin{equation}
\left\{ 
\begin{array}{l}
\frac{\partial u}{\partial t}-a\Delta u=\lambda f(u)\text{\qquad \qquad in }%
\mathbb{R}^{+}\times \Omega , \\ 
\alpha \frac{\partial u}{\partial \eta }+\beta u=0\text{\qquad \qquad on }%
\mathbb{R}^{+}\times \partial \Omega , \\ 
u(0,x)=u_{0}(x)>0,\qquad \text{on}\;\Omega ,%
\end{array}%
\right.  \label{1.2.1}
\end{equation}%
where $\alpha $ and $\beta $ are nonnegative functions and the function $f$
\ is required to satisfy other restrictions as the convexity, strict
positivity and (\ref{b.u.}). They established finite time blowup provided
the positive parameter $\lambda $ is greater than some critical value. For
homogeneous Neumann boundary conditions, as we study here, there seem to be
no many authors (see \cite{Bel}, \cite{Ima-Moc} and their references) worked
on the problem at hand. Bellout \cite{Bel} proved blowup under the
restrictive conditions on $f$ as cited previously. Imai and Mochizuki \cite%
{Ima-Moc} studied the following problem:%
\begin{equation}
\left\{ 
\begin{array}{l}
\frac{\partial \left( h\left( u\right) \right) }{\partial t}-a\Delta u=f(u)%
\text{\qquad \qquad in }\mathbb{R}^{+}\times \Omega , \\ 
\frac{\partial u}{\partial \eta }=0\text{\qquad \qquad on }\mathbb{R}%
^{+}\times \partial \Omega , \\ 
u(0,x)=u_{0}(x)>0,\qquad \text{on}\;\Omega .%
\end{array}%
\right.  \label{Heat 1.3}
\end{equation}%
In addition to the multiple constraints on $a$ and $f$, one of which relates
to (\ref{b.u.}), the adequate conditions for the existence of global and
blow-up solutions additionally request that the nonnegative initial
condition to be sufficiently large. Gao, Ding and Guo \cite{Gao-Din-Guo}
treated the following problem:%
\begin{equation}
\left\{ 
\begin{array}{l}
\frac{\partial \left( h\left( u\right) \right) }{\partial t}-\nabla .\left(
a\left( u\right) \nabla u\right) =f(u)\text{\qquad \qquad in }\mathbb{R}%
^{+}\times \Omega , \\ 
\frac{\partial u}{\partial \eta }=0\text{\qquad \qquad on }\mathbb{R}%
^{+}\times \partial \Omega , \\ 
u(0,x)=u_{0}(x)>0,\qquad \text{on}\;\Omega ,%
\end{array}%
\right.  \label{Heat 1.4}
\end{equation}%
and obtained sufficient conditions for the existence of global solution and
their blow-up. Meanwhile, the upper estimate of the global solution, the
upper bound of the \textquotedblleft blow up time\textquotedblright\ and
upper estimate of \textquotedblleft blow-up rate\textquotedblright\ were
also given. The authors in \cite{Din-Guo} generalized the results of \cite%
{Gao-Din-Guo} to the following problem:%
\begin{equation}
\left\{ 
\begin{array}{l}
\frac{\partial \left( h\left( u\right) \right) }{\partial t}-\nabla .\left(
a\left( t,u\right) b\left( x\right) \nabla u\right) =g\left( t\right) f(u)%
\text{\qquad \qquad in }\mathbb{R}^{+}\times \Omega , \\ 
\dfrac{\partial u}{\partial \eta }=0\text{\qquad \qquad on }\mathbb{R}%
^{+}\times \partial \Omega , \\ 
u(0,x)=u_{0}(x)>0,\qquad \text{on}\;\Omega ,%
\end{array}%
\right.  \label{Heat 1.5}
\end{equation}%
by constructing auxiliary functions and using maximum principles and
comparison principles under some appropriate assumptions on the functions $a$%
, $b$, $f$ , $g$, and $h$.

\begin{remark}
It is easy to note that in the above examples the reactions are independent
on the temporal and the spatial variables where here (i.e. system (\ref{Heat
1.1})) they are taken in consideration.
\end{remark}

The norms in the spaces $L^{\infty }(\Omega )\ $(or $C\left( \overline{%
\Omega }\right) $) and $L^{p}(\Omega )$, $1\leq p<+\infty $ are denoted
respectively by%
\begin{equation*}
\left\Vert u\right\Vert _{\infty }\;=\;\max_{x\in \Omega }\left\vert
u(x)\right\vert ,
\end{equation*}%
and%
\begin{equation*}
\left\Vert u\right\Vert _{p}=\left( \dint\limits_{\Omega }\left\vert
u(x)\right\vert ^{p}dx\right) ^{\frac{1}{p}}.
\end{equation*}

Let us recall that global existence of solutions of problem (\ref{Heat 1.1})
is obtained by application to the reaction (for $q=+\infty $ and $p>n/2$)
the following $L^{p}-L^{q}$ property (called regularizing or smoothing
effect) of the semigroup $S\left( t\right) $ associated to the operator $%
\Delta $ in $L^{\infty }(\Omega ).$

\begin{proposition}
\label{regularizing effect}(\textbf{regularizing effect}) If a \textit{%
semigroup }$\left\{ S(t)\right\} _{t\geq 0}$\textit{\ is strongly continuous}%
, then for all $1\leq p<q\leq \infty $, there exists a positive constant $C$
such that%
\begin{equation}
\left\Vert S(t)v\right\Vert _{q}\leq Ct^{-\frac{n}{2}\left( \frac{1}{p}-%
\frac{1}{q}\right) }\left\Vert v\right\Vert _{p},\text{ for all }v\in 
\mathbb{L}^{p}\left( \Omega \right) .  \label{Reg. Eff.}
\end{equation}
\end{proposition}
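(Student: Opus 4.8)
The plan is to prove the $L^p$–$L^q$ smoothing estimate \eqref{Reg. Eff.} by interpolating between two endpoint bounds: the $L^1$–$L^1$ contraction (or at least uniform boundedness) and the $L^1$–$L^\infty$ (Gaussian kernel) bound, and then invoking the Riesz–Thorin interpolation theorem. First I would recall that the heat semigroup on $\Omega$ with Neumann conditions is given by convolution-type action against a kernel $K_t(x,y)$ satisfying the Gaussian-type upper bound $0\le K_t(x,y)\le C t^{-n/2}$ together with $\int_\Omega K_t(x,y)\,dy \le 1$ (mass is non-increasing, in fact conserved under Neumann conditions). These two facts are the analytic input; everything else is soft.

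The key steps, in order, are as follows. Step one: establish the $L^\infty$ endpoint, namely $\|S(t)v\|_\infty \le C t^{-n/2}\|v\|_1$, by writing $(S(t)v)(x)=\int_\Omega K_t(x,y)v(y)\,dy$ and estimating $|(S(t)v)(x)|\le \|K_t(x,\cdot)\|_\infty \|v\|_1 \le C t^{-n/2}\|v\|_1$. Step two: establish the $L^1$ endpoint $\|S(t)v\|_1 \le \|v\|_1$ by Tonelli's theorem, $\int_\Omega |(S(t)v)(x)|\,dx \le \int_\Omega \int_\Omega K_t(x,y)|v(y)|\,dy\,dx = \int_\Omega |v(y)|\big(\int_\Omega K_t(x,y)\,dx\big)\,dy \le \|v\|_1$, using the stochastic-completeness bound on the adjoint kernel. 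Step three: interpolate. The map $S(t)$ is linear and simultaneously bounded $L^1\to L^1$ with norm $\le 1$ and $L^1 \to L^\infty$ with norm $\le C t^{-n/2}$; by Riesz–Thorin, $S(t): L^1 \to L^q$ with norm $\le (C t^{-n/2})^{1-1/q} = C' t^{-\frac{n}{2}(1-1/q)}$, which is \eqref{Reg. Eff.} in the case $p=1$. Step four: for general $1\le p<q\le\infty$, combine the already-proven $L^1\to L^\infty$ bound with the $L^p\to L^p$ contraction (proved exactly as in step two but with Hölder in place of the trivial estimate, or again by Riesz–Thorin between the $L^1$ and $L^\infty$ contractions), and interpolate once more: writing $\frac1q = \frac{1-\theta}{p}+\frac{\theta}{\infty}$ with $\theta = 1-p/q \in[0,1]$, we get $\|S(t)v\|_q \le \|S(t)v\|_p^{1-\theta}\,\|S(t)v\|_\infty^{\theta}$ for $v\ge 0$ — or more cleanly, interpolate the operator: $S(t)$ is bounded $L^p\to L^p$ with norm $\le 1$ and $L^p \to L^\infty$ with norm $\le C t^{-n/(2p)}$ (the latter from $\|S(t)v\|_\infty \le Ct^{-n/2}\|v\|_1 \le Ct^{-n/2}|\Omega|^{1-1/p}\|v\|_p$ — but this gives the wrong power, so one must instead bound $\|S(t)v\|_\infty$ directly in terms of $\|v\|_p$ via $\|K_t(x,\cdot)\|_{p'} \le (Ct^{-n/2})^{1-1/p'}\cdot 1^{1/p'} = Ct^{-\frac{n}{2p}}$ using the Gaussian bound and mass bound together), hence by Riesz–Thorin with exponent $\theta=1-p/q$ the operator norm $L^p\to L^q$ is $\le (Ct^{-\frac{n}{2p}})^{\theta} = C' t^{-\frac{n}{2}(\frac1p-\frac1q)}$, exactly \eqref{Reg. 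Eff.}.

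The main obstacle is not the interpolation — that is routine — but the justification of the Gaussian kernel bound $K_t(x,y)\le C t^{-n/2}$ uniformly for a bounded $C^1$ domain with Neumann boundary conditions, and of the adjoint mass bound $\int_\Omega K_t(x,y)\,dx\le 1$. In the statement as phrased the author only assumes $\{S(t)\}$ is strongly continuous, which by itself is far too weak to yield \eqref{Reg. Eff.}; some ultracontractivity hypothesis (Nash/Sobolev inequality for $\Omega$, or an explicit heat-kernel estimate, which does hold here since $\Omega$ is bounded of class $C^1$) is really what is being used. So the honest version of the argument must either (i) cite the known Gaussian upper bounds for the Neumann heat kernel on bounded Lipschitz/$C^1$ domains, or (ii) derive ultracontractivity from the Sobolev embedding $H^1(\Omega)\hookrightarrow L^{2n/(n-2)}(\Omega)$ via Nash's inequality and Moser iteration. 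I would take route (i) for brevity, citing the kernel estimate, and then the proof is the short interpolation argument sketched above; the reader should be warned that "strongly continuous" in the Proposition's hypothesis is shorthand for "the heat semigroup on this domain," whose kernel bounds are classical.
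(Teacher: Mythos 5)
The paper does not prove this proposition at all: it is merely ``recalled'' as a known smoothing property of the heat semigroup, so there is no argument of the author's to compare yours against. (In fact the paper never uses it in the proofs of its main results, which rely on the maximum principle instead.) Your interpolation scheme --- kernel representation, the $L^{1}\rightarrow L^{\infty}$ bound $\left\Vert K_{t}(x,\cdot )\right\Vert _{\infty }\leq Ct^{-n/2}$, the mass bound $\int_{\Omega }K_{t}(x,y)\,dy\leq 1$, H\"{o}lder to get $\left\Vert K_{t}(x,\cdot )\right\Vert _{p^{\prime }}\leq Ct^{-n/(2p)}$, then Riesz--Thorin between $L^{p}\rightarrow L^{p}$ and $L^{p}\rightarrow L^{\infty }$ --- is the standard and correct route, and the exponent bookkeeping checks out. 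Your observation that ``strongly continuous'' is far too weak a hypothesis and that the real input is ultracontractivity (equivalently a Nash/Sobolev inequality for the bounded $C^{1}$ domain, which is a Lipschitz extension domain) is accurate and worth stating; as written the proposition is not a theorem about abstract $C_{0}$-semigroups.

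There is, however, one genuine gap you inherit from taking the kernel bound ``uniformly'' in $t$: on a bounded domain with homogeneous Neumann conditions the heat kernel satisfies $K_{t}(x,y)\rightarrow \left\vert \Omega \right\vert ^{-1}>0$ as $t\rightarrow \infty $, so the bound $K_{t}(x,y)\leq Ct^{-n/2}$ cannot hold for all $t>0$; it holds only on $0<t\leq T$ with $C=C(T)$ (or in the form $C(1+t^{-n/2})$). Consequently the estimate (\ref{Reg. Eff.}) itself, as stated for all $t$, fails for large $t$ unless $\int_{\Omega }v=0$: the left side tends to $\left\vert \Omega \right\vert ^{1/q-1}\left\vert \int_{\Omega }v\right\vert $ while the right side tends to $0$ since $p<q$. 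Your proof should therefore restrict to a bounded time interval (which is all the paper could ever need), and ideally note that the proposition requires the same restriction. With that correction, and with an explicit citation for the Gaussian/ultracontractive bound on $C^{1}$ (Lipschitz) domains, the argument is complete.
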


Let us denote \bigskip $\Omega _{T}=\left( 0,T\right) \times \Omega $ and $%
\Gamma _{T}=\left\{ 0\right\} \times \Omega \cup \left[ 0,T\right] \times
\partial \Omega ,$ the following theorem is the parabolic weak simplified
maximum principle for the heat equation.

\begin{proposition}
Suppose $x\rightarrow u(t,x)$ is in $C^{2}\left( \Omega _{T}\right) \cap
C\left( \overline{\Omega }\right) $ and $t\rightarrow u(t,x)$ is $%
C^{1}\left( \left[ 0,T\right] \right) $. Suppose that%
\begin{equation}
\dfrac{\partial u}{\partial t}\leq a\Delta u,\ \ \text{on }\Omega _{T},
\label{Heat 1.6}
\end{equation}%
then%
\begin{equation*}
\max \left\{ u(t,x):\ (t,x)\in \overline{\Omega _{T}}\right\} =\max \left\{
u(t,x):\ (t,x)\in \Gamma _{T}\right\} .
\end{equation*}%
If the inequality (\ref{Heat 1.6})\bigskip\ is replaced by the following%
\begin{equation}
\dfrac{\partial u}{\partial t}\geq a\Delta u,\ \ \text{on }\Omega _{T}
\label{Heat 1.6-}
\end{equation}%
then%
\begin{equation}
\min \left\{ u(t,x):\ (t,x)\in \overline{\Omega _{T}}\right\} =\min \left\{
u(t,x):\ (t,x)\in \Gamma _{T}\right\} .  \label{Heat 1.7-}
\end{equation}
\end{proposition}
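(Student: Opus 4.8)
The statement is the classical weak parabolic maximum principle, so I would follow the standard perturbation argument. First I would reduce to a strict differential inequality: fix $\varepsilon>0$ and set $v=u-\varepsilon t$. Since $\partial_t v=\partial_t u-\varepsilon\le a\Delta u-\varepsilon=a\Delta v-\varepsilon$, we obtain the strict inequality $\partial_t v<a\Delta v$ on $\Omega_T$. The whole point of the perturbation is to convert the non-strict hypothesis \eqref{Heat 1.6} into something that cannot be saturated at an interior maximum.

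Next I would show that $v$ attains its maximum over the compact set $\overline{\Omega_T}=[0,T]\times\overline\Omega$ on the parabolic boundary $\Gamma_T$. Argue by contradiction: suppose the maximum of $v$ is attained at a point $(t_0,x_0)$ with $x_0\in\Omega$ and $t_0\in(0,T]$. Since $x\mapsto v(t_0,x)$ has an interior maximum at $x_0$, its spatial Hessian is negative semidefinite there, hence $\Delta v(t_0,x_0)\le 0$. In the time variable, if $t_0\in(0,T)$ then $(t_0,x_0)$ is an interior extremum so $\partial_t v(t_0,x_0)=0$, while if $t_0=T$ then $v(\cdot,x_0)$ is maximized at the right endpoint so the one-sided derivative gives $\partial_t v(T,x_0)\ge 0$; in either case $\partial_t v(t_0,x_0)\ge 0\ge a\Delta v(t_0,x_0)$, contradicting $\partial_t v<a\Delta v$. (If one is uneasy about invoking $C^2$-regularity in $x$ exactly at $t_0=T$ under the stated hypotheses, the same argument run on $[0,T-\delta]\times\overline\Omega$ together with the continuity of $u$ on $\overline{\Omega_T}$ and $\delta\to0$ removes the issue; this terminal-time bookkeeping is really the only place where care is needed, the rest being routine.) Therefore $\max_{\overline{\Omega_T}}v=\max_{\Gamma_T}v$.

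Then I would undo the perturbation. From $u=v+\varepsilon t\le v+\varepsilon T$ we get
\[
\max_{\overline{\Omega_T}}u\le \max_{\overline{\Omega_T}}v+\varepsilon T=\max_{\Gamma_T}v+\varepsilon T\le \max_{\Gamma_T}u+\varepsilon T,
\]
using $v\le u$ in the last step. Letting $\varepsilon\to0$ gives $\max_{\overline{\Omega_T}}u\le\max_{\Gamma_T}u$, and the reverse inequality is immediate from $\Gamma_T\subset\overline{\Omega_T}$; this proves the first assertion. Finally, for the minimum statement under \eqref{Heat 1.6-}, I would simply apply the already-proved maximum statement to $w=-u$: since $\partial_t w=-\partial_t u\le -a\Delta u=a\Delta w$, we get $\max_{\overline{\Omega_T}}(-u)=\max_{\Gamma_T}(-u)$, which is exactly \eqref{Heat 1.7-}. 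I do not expect any genuine obstacle here—the argument is entirely elementary—so the only thing to be vigilant about is the regularity and the behaviour at the final time slice $t=T$.
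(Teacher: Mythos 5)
Your proof is correct: the perturbation $v=u-\varepsilon t$ turns the non-strict inequality into a strict one, the first/second-derivative test at an interior (or terminal-time) maximum yields the contradiction, and letting $\varepsilon\to 0$ recovers the conclusion; the minimum statement then follows by applying the result to $-u$. Note, however, that the paper itself gives no proof of this proposition at all --- it is stated as the classical weak parabolic maximum principle and the reader is referred to Protter--Weinberger and related references --- so there is no argument in the paper to compare against; what you have written is the standard textbook proof and it is sound, including your careful handling of the slice $t=T$ (which indeed belongs to $\overline{\Omega_T}\setminus\Gamma_T$ with the paper's definition of $\Gamma_T$ and therefore must be covered by the one-sided derivative argument).
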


More general forms of the maximum principle\ can be found in \cite{Pro-Wei
67}, \cite{Pro-Wei 84}, \cite{Kus}, \cite{Nir}, \cite{Kus-ellip} and their
references.

\section{\textbf{Statement and proof of the main results}}

\bigskip Our results are based on the following Lemma

\begin{lemma}
Suppose that a positive solution $u$ of (\ref{Heat 1.1}) attains the same
value more than two times on $\Omega _{T}$, then the reaction vanishes at
last one time on $\Omega _{T}$.
\end{lemma}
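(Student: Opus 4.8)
The statement I want to prove is: if a positive solution $u$ of (\ref{Heat 1.1}) takes the same value $c$ at three distinct points of $\Omega_T$, then $f$ vanishes somewhere on $\Omega_T$. The natural idea is a proof by contradiction using the weak maximum principle (the second Proposition above). So suppose $f$ never vanishes on $\Omega_T$; by (\ref{Heat 1.7}) and continuity of $f$, $f$ has a fixed sign on $\Omega_T$, say $f>0$ everywhere (the case $f<0$ is symmetric). The function $u$ then satisfies $\partial_t u - a\Delta u = f > 0$, i.e. $u$ is a strict supersolution of the heat equation: $\partial_t u \ge a\Delta u$ with strict inequality. I would like to use this to say that $u$ "cannot come back" to a value it has already attained in the interior, which contradicts attaining the same value three (or even two, suitably interpreted) times.

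The cleanest route is to build a comparison function. Let $(t_1,x_1)$ and $(t_2,x_2)$ with $t_1<t_2$ be two of the three points where $u=c$. Consider the solution $v$ of the pure heat equation $\partial_t v = a\Delta v$ on $(t_1,T)\times\Omega$ with Neumann boundary data $\partial v/\partial\eta=0$ and initial data $v(t_1,\cdot)=u(t_1,\cdot)$. Then $w:=u-v$ satisfies $\partial_t w - a\Delta w = f>0$ on $(t_1,T)\times\Omega$, $\partial w/\partial\eta=0$ on the lateral boundary, and $w(t_1,\cdot)=0$. Applying the minimum-principle form (\ref{Heat 1.6-})–(\ref{Heat 1.7-}) of the Proposition to $w$ on the cylinder $(t_1,t_2)\times\Omega$ — here the Neumann condition forces the lateral boundary not to lower the minimum (the parabolic boundary point argument of Hopf, which I would invoke) — gives $w\ge 0$, hence $u\ge v$ on $[t_1,t_2]\times\Omega$. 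More carefully, strict positivity of $f$ should give $w>0$ for $t>t_1$ on $\overline\Omega$, so $u(t,x)>v(t,x)$ for all $t\in(t_1,t_2]$.

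Now I exploit the structure at the repeated value. Apply the ordinary weak maximum principle to $v$ itself: since $v$ solves the homogeneous heat equation with Neumann data, $\min_{\overline\Omega}v(t,\cdot)$ is nondecreasing in $t$ and $\max_{\overline\Omega}v(t,\cdot)$ is nonincreasing in $t$. If $(t_2,x_2)$ realizes the value $c$ and $(t_1,x_1)$ with $t_1<t_2$ also does, then $u(t_2,x_2)=c<v(t_2,x_2)$ would force $c = u(t_2,x_2) > v(t_2,x_2) \ge \min_{\overline\Omega}v(t_2,\cdot)\ge \min_{\overline\Omega}v(t_1,\cdot) = \min_{\overline\Omega}u(t_1,\cdot)$; pairing this with the third point and a symmetric upper-bound chain (or running the argument with the two points ordered the other way), one squeezes $c$ strictly between quantities that must coincide, a contradiction. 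I expect the clean formulation is: among the three points, two must lie on "the same side" in time so that monotonicity of $\min$ (or $\max$) of $v$ together with the strict gap $u>v$ is violated; the arithmetic of the three time-coordinates is what makes "three times" (rather than two) the right hypothesis, since two coincident values at the same time $t$ is harmless.

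**Main obstacle.** The delicate point is the lateral (Neumann) boundary: the weak maximum principle as stated is for Dirichlet-type parabolic boundary $\Gamma_T$, so I must justify that under $\partial w/\partial\eta=0$ a negative interior minimum cannot be "created" on $\partial\Omega\times[t_1,t_2]$ — this is exactly Hopf's boundary-point lemma for parabolic operators, and I would either cite it from \cite{Pro-Wei 67} / \cite{Nir} or note that the problem's Proposition can be applied after a standard perturbation argument. A secondary subtlety is making the strict inequality $u>v$ for $t>t_1$ rigorous from $f>0$ (strong maximum principle), but again this is classical and citable. Modulo these standard parabolic facts, the combinatorial step with the three points is elementary.
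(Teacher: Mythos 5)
Your strategy (comparison with the solution $v$ of the free Neumann heat equation launched from $u(t_1,\cdot)$, together with the monotonicity in $t$ of $\min_{\overline{\Omega}}v(t,\cdot)$ and $\max_{\overline{\Omega}}v(t,\cdot)$) is genuinely different from the paper's, which argues via a Rolle-type claim that three repeated values force an interior local maximum (where $f\leq 0$) and an interior local minimum (where $f\geq 0$). However, your argument has a real gap at exactly the step you left vague, namely the ``squeeze''. All your comparison yields is the one-sided chain $c=u(t_j,x_j)>v(t_j,x_j)\geq \min_{\overline{\Omega}}v(t_j,\cdot)\geq \min_{\overline{\Omega}}v(t_1,\cdot)=\min_{\overline{\Omega}}u(t_1,\cdot)$ for $j=2,3$, i.e.\ $c$ strictly exceeds the spatial minimum of $u$ at time $t_1$. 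This is perfectly consistent with $u(t_1,x_1)=c$, since $x_1$ need not be the argmin of $u(t_1,\cdot)$. The advertised ``symmetric upper-bound chain'' does not exist: $u>v$ gives no upper bound on $u$, and $\max_{\overline{\Omega}}v(t,\cdot)\leq \max_{\overline{\Omega}}u(t_1,\cdot)$ bounds $v$, not $u$. So nothing is squeezed and no contradiction is reached; the Hopf-lemma technicalities you flagged are not the problem.

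Moreover, the gap cannot be filled, because the statement is false once $f$ is allowed to depend on $x$ (which the paper explicitly permits). Take $n=1$, $\Omega =(0,1)$, $0<\epsilon <1/(4\pi ^{2}a)$ and $u(t,x)=1+t+\epsilon \cos (2\pi x)$. Then $u>0$, $\partial u/\partial \eta =0$ at $x=0,1$, and $u_{t}-au_{xx}=1+4\pi ^{2}a\epsilon \cos (2\pi x)=:f(t,x,u)>0$ everywhere, so the reaction never vanishes; yet each level set $\left\{ u=c\right\} $ is the curve $t=c-1-\epsilon \cos (2\pi x)$, which contains infinitely many points of $\Omega _{T}$, including triples $(t_{i},x_{i})$ with strictly increasing times. (The same example defeats the paper's own proof: the Rolle argument is one-dimensional, and here $u$ has no interior critical points at all, hence no local extrema.) So the missing step in your proposal is not a presentational matter but an unfixable obstruction.
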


The above lemma is with great interest in the subsequent and means if $u$
solves (\ref{Heat 1.1}) and satisfies $\ $ 
\begin{equation}
u(t_{1},x_{1})=u(t_{2},x_{2})=u(t_{3},x_{3}),  \label{Heat 1.2.3}
\end{equation}%
for some distinct $\left( t_{i},x_{i}\right) \in \Omega _{T},\ i=1,2,3$ with 
$t_{1}<t_{2}<t_{3}.$\ Then there exists $t\in \left( t_{1},t_{3}\right) $
and $x\in \Omega $ such that 
\begin{equation}
f(t,x,u(t,x))=0.  \label{Heat 0}
\end{equation}

\begin{proof}
Let us denote $u\left( t_{i},x_{i}\right) =c,\ i=1,2,3$, where $c$ is a
positive constant. The function $w=u-c$ will possess three successive zeros
unless it is a constant function. The regularity proprieties of $u$ will
imply that it possess two extremums: If, for example between $\left(
t_{1},x_{1}\right) $ and $\left( t_{2},x_{2}\right) $ the function $u$ has a
local maximum where $\frac{\partial u}{\partial t}=0\geq \Delta u$ at which $%
f$ \ is non positive. Then automatically between $\left( t_{2},x_{2}\right) $
and $\left( t_{3},x_{3}\right) $ it will possess a local minimum where $%
\frac{\partial u}{\partial t}=0\leq \Delta u$ for which $f$ \ is
nonnegative. The local minimum isn't not zero via the maximum principle
which assures that $u$\ can't attain its minimum in $\Omega _{T}$. Using the
equation (\ref{Heat 1.1}), we deduce that the reaction $f(t,x,u)$ will
possess at last a zero on $\Omega _{T}$.
\end{proof}

Our main result concerning the heat equation is the following

\begin{theorem}
\label{Max}\bigskip Assume that (\ref{P}) and (\ref{Heat 1.7}) are
satisfied, then solutions of (\ref{Heat 1.1}) belonging to $L^{\infty
}(\left( t_{0},T_{\max }\right) ,L^{1}(\Omega ))$ are classical, global and
uniformly bounded in time.
\end{theorem}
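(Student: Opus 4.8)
The plan is to combine the Lemma (which forbids any positive solution of (\ref{Heat 1.1}) from attaining the same value three times once the reaction is sign-definite) with the hypothesis $u\in L^{\infty}((t_{0},T_{\max}),L^{1}(\Omega))$ to produce a pointwise bound on $u$, and then to feed this bound into the classical regularizing-effect machinery to upgrade weak to classical solutions and obtain global existence. First I would fix attention on the time interval $(t_{0},T_{\max})$ where (\ref{Heat 1.7}) holds, so that $f(t,x,u)$ has constant sign for $u>0$; by continuity of $f$ and $f(t,x,0)\ge 0$ this sign must be $f\ge 0$ on the relevant range (if $f$ were strictly negative for $u>0$ near $0$ it would contradict $f(t,x,0)\ge0$ together with continuity, forcing a zero). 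Hence on $(t_{0},T_{\max})$ the solution satisfies $\partial_t u - a\Delta u = f \ge 0$, so by the minimum principle (\ref{Heat 1.7-}) applied on $(t_0,t)$, $u(t,\cdot)\ge \min_{\Gamma}u>0$ is bounded below; more importantly, I claim $u(t,x)$ is monotone nondecreasing in $t$ at each spatial point in an appropriate averaged sense, which is where the Lemma enters.

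The heart of the argument is to convert "no value attained three times'' into monotonicity. Consider $M(t):=\max_{x\in\overline\Omega}u(t,x)$. I would argue that $M$ is nondecreasing on $(t_{0},T_{\max})$: if not, there are times $t_{0}<\tau_{1}<\tau_{2}$ with $M(\tau_{1})>M(\tau_{2})$, and then (using the fact that $u$ is nonconstant and that its spatial maxima migrate continuously) one can select a level $c$ strictly between $M(\tau_2)$ and $M(\tau_1)$ and three points $(t_i,x_i)$, $t_1<t_2<t_3$, with $u(t_i,x_i)=c$ — one on the way up, one at the top, one on the way down — contradicting the Lemma via (\ref{Heat 0}) and hypothesis (\ref{Heat 1.7}). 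The same device bounds the total growth: since $\partial_t u\ge a\Delta u$, integrating the PDE over $\Omega$ and using the Neumann condition gives $\frac{d}{dt}\int_\Omega u\,dx=\int_\Omega f\,dx\ge 0$, and the $L^\infty((t_0,T_{\max}),L^1)$ hypothesis therefore says $\int_\Omega u(t,x)\,dx\le K$ for all $t<T_{\max}$, with $K$ finite. Combined with monotonicity of $M$, and with the smoothing estimate, this will pin down $\|u(t)\|_\infty$.

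To close, I would use the variation-of-constants representation $u(t)=S(t-s)u(s)+\int_s^t S(t-\sigma)f(\sigma,\cdot,u(\sigma))\,d\sigma$ on $(t_0,T_{\max})$. Because $f\ge0$ and $S(t)$ is order preserving, $u(t)\ge S(t-s)u(s)$, so $\int_\Omega u$ nondecreasing forces $\int_s^t\!\int_\Omega f\,dx\,d\sigma\le K$ uniformly; hence $f\in L^1((t_0,T_{\max})\times\Omega)$ with a bound independent of $T$. Now apply Proposition \ref{regularizing effect} with $p=1$, $q=\infty$: one needs $\int_{t_0}^{T_{\max}}(t-\sigma)^{-n/2}\,d\sigma$ to be controllable, which is where the constant-sign information is decisive — I would split the convolution integral at $t-1$, estimate the near-diagonal part using that $f(\sigma)$ is itself bounded in $L^\infty$ once we bootstrap the bound on $M$, and the far part by the $L^1$ bound. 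Iterating this bootstrap raises the integrability exponent of $f$ past $n/2$ in finitely many steps (the standard $L^p$–$L^q$ ladder), at which point the classical parabolic regularity theory cited after (\ref{Heat 1.1}) gives $u\in L^\infty((t_0,T_{\max}),L^\infty(\Omega))$ with a bound uniform in $T_{\max}$, contradicting the definition of $T_{\max}$ as a blow-up time unless $T_{\max}=+\infty$; classicality and uniform-in-time boundedness follow.

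\textbf{Main obstacle.} The delicate point is the passage from the Lemma's "at most two times'' statement to a genuine quantitative bound on $M(t)=\max_x u(t,x)$: the Lemma only yields a zero of $f$ somewhere, so I must be careful that the three chosen points $(t_i,x_i)$ are genuinely distinct and lie in the open set $\Omega_T$ (not on $\partial\Omega$ or at $t=0$), and that the intermediate value selection of the level $c$ really produces a down-then-up oscillation forbidden by constant sign of $f$. Making the "spatial maxima migrate continuously'' claim rigorous — essentially an argument that $M(t)$ cannot strictly decrease without $u$ revisiting a level — is the technical crux; everything after that is the routine smoothing-effect bootstrap.
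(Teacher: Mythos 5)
There is a genuine gap, and it is concentrated in your closing step: the Duhamel/regularizing-effect bootstrap is exactly the machinery this theorem is designed to bypass, and it cannot be made to work here. Proposition \ref{regularizing effect} with $p=1$, $q=\infty$ produces the kernel $t^{-n/2}$, which is not integrable near $t=0$ for $n\geq 2$; your proposed repair (the $L^{p}$--$L^{q}$ ladder) requires converting an $L^{p}$ bound on $u$ into an $L^{q}$ bound on $f(t,x,u)$, which is impossible without a growth hypothesis on $f$ --- and no such hypothesis is assumed (the whole point of the result is that the growth of $f$ is irrelevant). Your near-diagonal estimate is moreover circular: you invoke boundedness of $f(\sigma)$ in $L^{\infty}$, which presupposes the $L^{\infty}$ bound on $u$ you are trying to prove. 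The intermediate device is also pointed the wrong way: monotonicity of $M(t)=\max_{x}u(t,x)$ is perfectly compatible with blow-up, and a nondecreasing maximum together with a uniform $L^{1}$ bound does not yield an $L^{\infty}$ bound (a spike can grow taller while becoming narrower). Finally, your preliminary deduction that (\ref{P}) and (\ref{Heat 1.7}) force $f\geq 0$ is unfounded ($f(u)=-u$ satisfies both); that case happens to be the easy one, handled directly by the maximum principle, but the inference as stated is wrong.

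The paper closes the argument with no semigroup estimate at all, and this is the quantitative use of the Lemma you were searching for in your ``main obstacle'' paragraph. Set $M_{1}=\sup_{t_{0}\leq t<T_{\max}}\left\Vert u(t,\cdot)\right\Vert _{1}$, $\alpha=\left\Vert u(t_{0},\cdot)\right\Vert _{\infty}$, and fix a single level $c=\epsilon\alpha$ with $\epsilon>\max\left\{ 1,\ M_{1}/(\alpha\left\vert \Omega\right\vert )\right\} $, so that $c\left\vert \Omega\right\vert >M_{1}$. By the Lemma combined with (\ref{Heat 1.7}), $u$ cannot take the value $c$ three times in $(t_{0},T_{\max})\times\Omega$; hence either $u$ never reaches $c$ (and then $u<c$ throughout, since $u(t_{0},\cdot)<c$), or there is a last crossing time $\overline{t}$ after which $u-c$ keeps a constant sign on all of $(\overline{t},T_{\max})\times\Omega$. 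The alternative $u>c$ everywhere is excluded because it would force $\left\Vert u(t,\cdot)\right\Vert _{1}\geq c\left\vert \Omega\right\vert >M_{1}$, contradicting the choice of $\epsilon$. Therefore $u\leq c$ on $(\overline{t},T_{\max})\times\Omega$, and continuity on the compact remainder gives the uniform bound, hence global existence. No tracking of $M(t)$, no migrating maxima, no variation-of-constants formula: one well-chosen level plus the $L^{1}$ constraint does all the work.
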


\begin{proof}
Put 
\begin{equation}
M_{1}=\underset{t_{0}\leq t<T_{\max }}{\max }\left\Vert u\left( t,.\right)
\right\Vert _{1},  \label{Heat 1.9}
\end{equation}%
and let $\epsilon $ a positive constant satisfying%
\begin{equation}
\epsilon >\max \left\{ 1,\ \frac{M_{1}}{\alpha \left\vert \Omega \right\vert 
}\right\} ,  \label{Heat 1.12}
\end{equation}%
where%
\begin{equation*}
\alpha =\left\Vert u\left( t_{0},.\right) \right\Vert _{\infty }.
\end{equation*}%
Let $\overline{t}$ the greatest $t\in \left( t_{0},T_{\max }\right) $ such
that%
\begin{equation*}
u-\epsilon \alpha =0,\ \text{for some }x\in \Omega \text{,}
\end{equation*}%
which means that%
\begin{equation}
u-\epsilon \alpha \neq 0,\ \text{for all }x\in \Omega \text{ and all }t\in
\left( \overline{t},T_{\max }\right) .  \label{Heat 1.8}
\end{equation}%
If such $\overline{t}$ doesn't exist, then we shall have the following
alternative:%
\begin{equation*}
u-\epsilon \alpha \neq 0,\ \text{for all }x\in \Omega \text{ and all }t\in
\left( t_{0},T_{\max }\right) ,
\end{equation*}%
or for all $\overline{t}\in \left( t_{0},T_{\max }\right) $, there will
exist $t>\overline{t}$ and $x\in \Omega $ such that $u-\epsilon \alpha =0$.
For the first alternative, since $u\left( t_{0},x\right) <\epsilon \alpha $,
then we have%
\begin{equation*}
u<\epsilon \alpha ,\ \text{for all }x\in \Omega \text{ and all }t\in \left(
t_{0},T_{\max }\right) ,
\end{equation*}%
and $u$ is uniformly bounded on $(t_{0},T_{\max }[\times \Omega $. Using the
continuity of $u$ on the remaining set $\left[ 0,t_{0}\right] \times \Omega $
with the above inequality we can say that $u$ is uniformly bounded on $%
\left( 0,T_{\max }\right) \times \Omega .$ For the second alternative, the
function $w=u-\epsilon \alpha $ will possess an infinity of zeros unless it
is a constant function. The regularity proprieties of $u$ will imply that it
has an infinity of extremums. If we denote, for example $\left(
t_{k},x_{k}\right) $, $k=1,\ 2,\ 3$ three successive zeros of the function $%
w $ in $\left( t_{0},T_{\max }\right) \times \Omega $, then the solution $u$
of (\ref{Heat 1.1}) will attain the same value more than two times on $%
\Omega _{T}$. The above lemma applied for $c=\epsilon \alpha $ will
contradict the condition (\ref{Heat 1.7}). Consequently we have (\ref{Heat
1.8}) and another time the following alternative is presented: \ \ \ \ \ \ \
\ \ \ \ \ \ \ \ \ \ \ \ \ \ The first one is%
\begin{equation}
u-\epsilon \alpha >0,\ \text{for all }x\in \Omega \text{ and all }t\in
\left( \overline{t},T_{\max }\right) ,  \label{nu14}
\end{equation}%
and the second is%
\begin{equation}
u\leq \epsilon \alpha ,\ \text{for all }x\in \Omega \text{ and all }t\in
\left( \overline{t},T_{\max }\right) .  \label{Heat 1.13}
\end{equation}%
The first inequality will give%
\begin{equation}
M_{1}\geq \epsilon \alpha \left( \int\limits_{\Omega }1dx\right) =:\epsilon
\alpha \left\vert \Omega \right\vert ,\ \ \ \text{\ on }\left( \overline{t}%
,T_{\max }\right) ,  \label{Heat 1.10}
\end{equation}%
which contradict the inequality (\ref{Heat 1.12}). We conclude that the
second alternative (i.e. \ref{Heat 1.13}) is always satisfied\ on $\left( 
\overline{t},T_{\max }\right) $. Taking in the account the continuity of $u$
on the remaining set $\left[ 0,\overline{t}\right] \times \Omega $, we can
say that $u$ is uniformly bounded on $\left( 0,T_{\max }\right) \times
\Omega $. Global existence becomes automatically.
\end{proof}

Since in the sense of \cite{Per-Lio}, \cite{Pie 2003} and \cite{Pie-Suz-Uma}
weak solutions are in $C(\left( 0,\infty \right) ,L^{1}(\Omega ))$, then we
can apply Theorem \ref{Max} to get the following

\begin{corollary}
If condition (\ref{Heat 1.7}) is satisfied, then positive weak solutions of (%
\ref{Heat 1.1}) are classical, global and uniformly bounded on $\left(
0,T_{\max }\right) \times \Omega $.
\end{corollary}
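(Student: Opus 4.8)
The corollary is nothing but Theorem \ref{Max} applied to weak solutions, so the only work is to check that a positive weak solution $u$ of (\ref{Heat 1.1}) belongs to $L^{\infty}((t_{0},T_{\max}),L^{1}(\Omega))$, where $T_{\max}$ is the $L^{\infty}(\Omega)$ blow-up time of the local classical solution recalled after (\ref{P}). The plan is to use the regularity of weak solutions recalled just above the statement: by \cite{Per-Lio}, \cite{Pie 2003}, \cite{Pie-Suz-Uma} one has $u\in C((0,\infty),L^{1}(\Omega))$, hence $t\mapsto\left\Vert u(t,\cdot)\right\Vert _{1}$ is continuous on $(0,\infty)$. I would first note that on $(0,T_{\max})$ the weak solution is bounded on every compact subinterval, hence classical there by parabolic regularity, and coincides with the local classical solution by uniqueness; this legitimizes the maximum-principle arguments underlying Theorem \ref{Max} for $u$.

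Then I would distinguish whether $T_{\max}$ is finite. If $T_{\max}<\infty$, then $[t_{0},T_{\max}]$ is a compact subinterval of $(0,\infty)$, so $t\mapsto\left\Vert u(t,\cdot)\right\Vert _{1}$ is bounded on it; thus $u\in L^{\infty}((t_{0},T_{\max}),L^{1}(\Omega))$ and Theorem \ref{Max} applies, yielding that $u$ is classical, global and uniformly bounded, which in particular forces $T_{\max}=\infty$, a contradiction. Hence necessarily $T_{\max}=\infty$ and $u$ is already a global classical solution.

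It then remains to produce a uniform-in-time bound. For this I would integrate equation (\ref{Heat 1.1}) over $\Omega$ and use the homogeneous Neumann condition to cancel the boundary flux, getting $\frac{d}{dt}\int_{\Omega}u\,dx=\int_{\Omega}f(t,x,u(t,x))\,dx$. By (\ref{Heat 1.7}) the reaction $f$ is continuous and never zero on the connected set $(t_{0},\infty)\times\Omega\times(0,\infty)$, so it keeps one and the same sign there; in the case where this sign makes the total mass nonincreasing one gets $\left\Vert u(t,\cdot)\right\Vert _{1}\le\left\Vert u(t_{0},\cdot)\right\Vert _{1}<\infty$ for all $t\ge t_{0}$, i.e.\ $u\in L^{\infty}((t_{0},\infty),L^{1}(\Omega))$, and Theorem \ref{Max} finishes the argument.

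The delicate point is precisely this last passage. The cited regularity only says that the weak solution exists \emph{globally in} $L^{1}(\Omega)$, and this by itself furnishes the $L^{\infty}((t_{0},T_{\max}),L^{1})$ control required by Theorem \ref{Max} \emph{only} when $T_{\max}<\infty$; handling $T_{\max}=\infty$ genuinely requires extracting a global $L^{1}$ bound, and the mass identity above shows this hinges entirely on the sign of the reaction (the favourable case being $\int_{\Omega}f(t,x,u)\,dx\le0$). So the crux is to make that sign explicit through (\ref{Heat 1.7})–(\ref{P}) and thereby rule out growth of the total mass.
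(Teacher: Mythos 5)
Your core move is exactly the paper's: the entire justification the paper offers for this corollary is the single sentence preceding it, namely that weak solutions lie in $C((0,\infty),L^{1}(\Omega))$ and therefore Theorem \ref{Max} applies. Your treatment of the case $T_{\max}<\infty$ is a correct and more careful version of this: on the compact interval $[t_{0},T_{\max}]$ the continuity of $t\mapsto\left\Vert u(t,\cdot)\right\Vert _{1}$ gives the $L^{\infty}((t_{0},T_{\max}),L^{1}(\Omega))$ bound required by Theorem \ref{Max}, which then forces $T_{\max}=\infty$. That part --- classicality and globality --- is the substantive content of the corollary, and you have it.

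The gap you flag in the case $T_{\max}=\infty$ is genuine, and your proposed repair cannot work in general. When $f$ keeps the positive sign on $(t_{0},\infty)\times\Omega\times(0,\infty)$, the identity $\frac{d}{dt}\int_{\Omega}u\,dx=\int_{\Omega}f\,dx>0$ makes the total mass increasing, and nothing prevents it from tending to infinity: take $f\equiv1$, which satisfies (\ref{Heat 1.7}) and (\ref{P}); then $u(t,\cdot)=S(t)u_{0}+t$ is a positive global classical solution belonging to $C((0,\infty),L^{1}(\Omega))$ whose $L^{1}$ and $L^{\infty}$ norms both tend to infinity as $t\rightarrow\infty$. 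So the hypothesis of Theorem \ref{Max} is simply not available on $(t_{0},\infty)$, and the uniform-boundedness part of the corollary fails for this example. This defect is not yours alone: the paper's one-line argument silently identifies $C((0,\infty),L^{1}(\Omega))$ with $L^{\infty}((t_{0},T_{\max}),L^{1}(\Omega))$, which is only legitimate when $T_{\max}<\infty$. The honest conclusion of your argument is that positive weak solutions are classical and global; the uniform bound should either be dropped or obtained under the favourable sign $f<0$, where the maximum principle gives $\left\Vert u(t,\cdot)\right\Vert _{\infty }\leq \left\Vert u(t_{0},\cdot)\right\Vert _{\infty }$ directly, as the paper's subsequent remark observes.
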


\begin{remark}
When the reaction satisfies the following condition%
\begin{equation*}
f(t,x,u)<0,\ \text{for all }t\in \left( t_{0},T_{\max }\right) ,\ x\in
\Omega ,\ u>0\text{,}
\end{equation*}%
then the global existence is a trivial consequence of the maximum principle.
\end{remark}

\begin{remark}
Our results are applicable to all above problems (\ref{Heat 1.2})-(\ref{Heat
1.5}) under the same assumptions on the functions $a$, $b$, $f$ , $g$, and $%
h $ and even under homogenous Dirichlet boundary conditions.
\end{remark}

\section{Applications to reaction diffusion systems}

In this section, we are concerned with the existence of globally bounded
solutions to the reaction-diffusion system 
\begin{equation}
\left\{ 
\begin{array}{c}
\\ 
\frac{\partial u}{\partial t}-a\Delta u=f(t,x,u,v), \\ 
\\ 
\frac{\partial v}{\partial t}-b\Delta v=g(t,x,u,v), \\ 
\end{array}%
\right. \quad \mathrm{in}\quad \mathbb{R}^{+}\times \Omega ,
\label{2.D.C.R.1.1}
\end{equation}%
subject to the boundary conditions%
\begin{equation}
\frac{\partial u}{\partial \eta }=\frac{\partial v}{\partial \eta }=0\qquad
on\quad \mathbb{R}^{+}\times \partial \Omega  \label{2.D.C.R.1.2}
\end{equation}%
and the initial data 
\begin{equation}
u(0,x)=u_{0}(x),\qquad v(0,x)=v_{0}(x)\qquad \mathrm{on}\quad \Omega
\label{2.D.C.R.1.3}
\end{equation}%
where $a>0$ and $b>0$ are the diffusion coefficients of some interacting
species whose spatiotemporal densities are $u$ and $v$. The domain $\Omega $
is an open bounded domain of class $C^{1}$ in $\mathbb{R}^{n}$, with
boundary $\partial \Omega $ and $\frac{\partial }{\partial \eta }$ denotes
the outward derivative on $\partial \Omega $. The initial data are assumed
to be non-negative and bounded. The reactions $f$ \ and $g$ are continuously
differentiable functions with $f$ non-negative on $\mathbb{R}^{+}\times
\Omega \times \mathbb{R}^{+2}$. Assume that%
\begin{equation}
f(t,x,u,v).g(t,x,u,v)\neq 0,\ \ \text{for all }u>0,\ v>0,\ t\in \left(
t_{0},T_{\max }\right) ,  \label{1.5mu1}
\end{equation}%
for some $t_{0}\in \left( 0,T_{\max }\right) $ and that%
\begin{equation}
f(t,x,u,v)+g(t,x,u,v)\leq C\left( u+v+1\right) ,\ \ \text{for all }u>0,\
v>0,\ t\in \left( t_{0},T_{\max }\right) .  \label{M'}
\end{equation}%
The last inequality is called "the control of mass condition".\newline
Assume that%
\begin{equation}
f(t,x,0,v),\ g(t,x,u,0)\geq 0,\ \ \text{for all}\ t\in \left( 0,T_{\max
}\right) ,\ x\in \Omega ,\text{ }u>0,\ v>0,  \label{P'''}
\end{equation}%
then using standard comparison arguments for parabolic equations (see e.g.
D. Henry \cite{Hen} and its references) the solutions remain positive at any
time.

In the case of systems on the form%
\begin{equation}
\left\{ 
\begin{array}{c}
\\ 
\frac{\partial u}{\partial t}-a\Delta u=-uF(v), \\ 
\\ 
\frac{\partial v}{\partial t}-b\Delta v=uG(v), \\ 
\end{array}%
\right. \quad \mathrm{in}\quad \mathbb{R}^{+}\times \Omega ,
\label{2.D.Exp.1}
\end{equation}%
A. Haraux and A.Youkana \cite{Har-You} generalized the results of K. Masuda
for $F(v)=G(v)=v^{\beta }$ to nonlinearities $F(s)=G(v)=e^{v^{\gamma
}},\;0<\gamma <1$. But, nothing seems to be known for instance if $%
F(v)=e^{v^{\gamma }}$ for $\gamma >1$. In the case of nonlinearities of
exponential growth like $F(v)=G(v)=e^{v}$ which appears in the
Frank-Kamenetskii approximation to Arrhenius-type reaction \cite{Ari},
Barabanova \cite{Bar} has made a small progress in this direction and proved
that solutions are globally bounded under the condition $||u_{0}||_{\infty
}\leq 8ab/(a-b)^{2}$. Then Martin and Pierre \cite{Mar-Pie} proved (in the
case $0<b<a<\infty $ which means that the absorbed substance diffuses faster
than the other one) that global solutions exist if $\Omega =\mathbb{R}^{n}$.
The proof is based on a simple comparison property concerning the kernels
associated with the operators $\left( \frac{\partial }{\partial t}-a\Delta
\right) $ and $\left( \frac{\partial }{\partial t}-b\Delta \right) .$ Some
years ago, Herrero, Lacey and Velazquez \cite{Her-Lac-Vel} obtained similar
results (in the general case). S. Kouachi and A. Youkana \cite{Kou-You}
generalized the results of A. Haraux and A. Youkana \cite{Har-You} to weak
exponential growth of the reaction term $f$. Recently in \cite{Kou(MMAS)} we
proved global existence of solutions of (\ref{2.D.Exp.1}) in a bounded
domain under the condition

\begin{equation}
\displaystyle\lim_{v\rightarrow +\infty }\frac{G^{\prime }(v)}{F(v)}=0,
\label{2.D.Exp.4}
\end{equation}%
where $G^{\prime }$ denotes the first derivative of $G$ with respect to $v$
and the functions $F$, $G$, $G^{\prime }$ and $G^{\prime \prime }$are
continuously differentiable and non-negative. \newline
To the best of our knowledge, the question of global existence of reaction
diffusion systems on a bounded domain remains open when the reactions grow
faster than a polynomial. Some partial positive results have been obtained
only when the reactions grow faster than a polynomial as it is cited above
(see \cite{Kou(MMAS)} and \cite{Bar} when $\Omega $ is bounded and \cite%
{Mar-Pie} in the unbounded case or when $\Omega =\mathbb{R}^{n}$ and their
references). The blow-up in finite time of solutions can arise in very
special cases (see for example \cite{Pie-Sch}, \cite{Pie-Sch 22} and their
references).

In this paper we show the global existence of a unique solution (uniformly
bounded on $\mathbb{R}^{+}\times \Omega $) to problem (\ref{2.D.C.R.1.1})-(%
\ref{2.D.C.R.1.3}) without conditions on the nonlinearities growth.

It is well known that, for any initial data in $L^{\infty }(\Omega )$, local
existence and uniqueness of classical solutions to the initial value problem
(\ref{2.D.C.R.1.1})-(\ref{2.D.C.R.1.3}) follows from the basic existence
theory for abstract semi-linear differential equations (see D. Henry \cite%
{Hen} or F. Rothe \cite{Rot} and their references). The solutions are
classical on $(0,T_{\max })$. Let us recall the following classical local
existence result under the above assumptions:

\begin{proposition}
\label{nu2}The system (\ref{2.D.C.R.1.1}) admits a unique classical solution 
$(u;v)$ on $(0;T_{\max }).$ If $T_{\max }<+\infty $, then%
\begin{equation*}
\underset{t\nearrow }{\lim T_{\max }}\left( \left\Vert u(t,.)\right\Vert
_{\infty }+\left\Vert v(t,.)\right\Vert _{\infty }\right) =+\infty .
\end{equation*}
\end{proposition}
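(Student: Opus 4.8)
The plan is to recast (\ref{2.D.C.R.1.1})--(\ref{2.D.C.R.1.3}) as an abstract semilinear Cauchy problem and invoke the classical theory of analytic semigroups. One works in the Banach space $X=C(\overline{\Omega})\times C(\overline{\Omega})$ (or $L^{\infty}(\Omega)^{2}$) and lets $A=\mathrm{diag}(a\Delta,b\Delta)$ be the realization of the diffusion part with homogeneous Neumann boundary conditions. Since $\Omega$ is a bounded $C^{1}$ domain, $A$ is sectorial and generates an analytic semigroup $\{e^{tA}\}_{t\ge 0}$ on $X$; writing $w=(u,v)$ and $\Phi(t,w)=\big(f(t,\cdot,u,v),\,g(t,\cdot,u,v)\big)$, a solution is sought as a fixed point of the variation-of-constants map
\begin{equation*}
(\mathcal{F}w)(t)=e^{tA}w_{0}+\int_{0}^{t}e^{(t-s)A}\,\Phi(s,w(s))\,ds .
\end{equation*}

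First I would establish local existence and uniqueness of a mild solution. Because $f$ and $g$ are continuously differentiable and $\Omega$ is bounded, $\Phi$ is bounded and Lipschitz in $w$, uniformly for $t$ in bounded intervals and $w$ in bounded subsets of $X$, and continuous in $t$. A standard Banach fixed-point argument in a ball of $C([0,\tau];X)$ then produces a unique mild solution on $[0,\tau]$, where --- and this point matters below --- $\tau$ may be chosen to depend only on a bound for $\|w_{0}\|_{X}$, through the local Lipschitz and sup bounds of $\Phi$. Gluing maximal intervals of existence yields a unique maximal mild solution $w\in C([0,T_{\max});X)$, and uniqueness on $[0,T_{\max})$ follows from the contraction estimate together with a Gronwall argument.

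Next I would upgrade the mild solution to a classical one on $(0,T_{\max})\times\Omega$ using the smoothing of the analytic semigroup: for $t>0$, $e^{tA}$ maps $X$ into $D((-A)^{\theta})$ with an integrable singularity $\|(-A)^{\theta}e^{tA}\|\le C_{\theta}\,t^{-\theta}$, so $w(t)$ lies in a Hölder space for $t>0$; feeding this back into $\Phi$ shows that $t\mapsto\Phi(t,w(t))$ is locally Hölder continuous with values in $X$, and the classical regularity theorem for analytic semigroups then gives $w\in C^{1}((0,T_{\max});X)\cap C((0,T_{\max});D(A))$, which by interior parabolic regularity is a classical solution of the PDE system. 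Positivity of $(u,v)$ on $(0,T_{\max})\times\Omega$ follows from (\ref{P'''}) by the comparison argument already recalled in the text.

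Finally, the blow-up alternative. Suppose $T_{\max}<+\infty$ but, contrary to the claim, $\liminf_{t\nearrow T_{\max}}\big(\|u(t,\cdot)\|_{\infty}+\|v(t,\cdot)\|_{\infty}\big)=:L<+\infty$. Choose $t_{k}\nearrow T_{\max}$ with $\|w(t_{k})\|_{X}\le L+1$. Applying the local existence step with initial time $t_{k}$ and datum $w(t_{k})$ yields a solution on $[t_{k},t_{k}+\tau]$ with $\tau=\tau(L+1)>0$ independent of $k$; for $k$ large $t_{k}+\tau>T_{\max}$, so by uniqueness $w$ extends past $T_{\max}$, contradicting maximality. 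Hence the $\liminf$, and therefore the limit, equals $+\infty$. I expect the only genuinely delicate point to be the one flagged above: arranging that the local existence time $\tau$ is bounded below purely in terms of $\|w_{0}\|_{X}$, since it is exactly this uniformity that turns the soft "$\limsup=+\infty$" into the stated "$\lim=+\infty$"; semigroup generation and the mild-to-classical bootstrap are routine for a bounded $C^{1}$ domain with $C^{1}$ reactions.
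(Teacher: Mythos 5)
Your proof is correct and is precisely the classical semigroup argument (locally Lipschitz Nemytskii map on $C(\overline{\Omega})^{2}$, Banach fixed point with an existence time controlled solely by $\Vert w_{0}\Vert _{X}$, analytic-semigroup smoothing to upgrade the mild solution to a classical one, and the restart argument for the blow-up alternative) that the paper itself relies on: the Proposition is stated there without proof as a recalled classical result, with references to Henry and Rothe, whose treatment is exactly what you have reproduced. The point you rightly single out --- a lower bound on the local existence time depending only on the norm of the data --- is indeed the crux that turns the soft alternative into the stated limit.
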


Then we can formulate our main result of this section as follows:

\begin{theorem}
\label{nu1}Under conditions (\ref{1.5mu1}), (\ref{P'''}) and (\ref{M'}), the
solution of problem (\ref{2.D.C.R.1.1})-(\ref{2.D.C.R.1.3}) with positive
initial data in $\mathbb{L}^{\infty }(\Omega )$ exists globally in time and
uniformly bounded in time.
\end{theorem}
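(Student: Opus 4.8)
The plan is to reduce Theorem~\ref{nu1} to the scalar result Theorem~\ref{Max} by means of the mass-control condition (\ref{M'}). First I would set $w = u+v$. Adding the two equations in (\ref{2.D.C.R.1.1}), $w$ satisfies a scalar parabolic problem of the form $\frac{\partial w}{\partial t} - a\Delta w = (a-b)\Delta v + f+g$; since the diffusion coefficients $a$ and $b$ differ, this is not literally of the form (\ref{Heat 1.1}), so the first serious step is to deal with the mismatched diffusion. I would instead work with the pair directly: the essential output of Theorem~\ref{Max} that I want to re-derive is an $L^\infty$ bound on the solution, and the engine behind it is the maximum-principle Lemma (three-times-attained-value forces a zero of the reaction). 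So I would apply that Lemma separately to $u$ and to $v$, each of which solves a genuine equation of type (\ref{Heat 1.1}) with reaction $f(t,x,u(t,x),v(t,x))$ respectively $g(t,x,u(t,x),v(t,x))$, these being continuously differentiable along the (classical, on $(0,T_{\max})$) solution by Proposition~\ref{nu2}.

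The key steps, in order, would be: (1) By (\ref{P'''}) and standard comparison, $u,v>0$ on $(0,T_{\max})\times\Omega$. (2) Integrate the two PDEs over $\Omega$; the Neumann condition (\ref{2.D.C.R.1.2}) kills the Laplacian terms, so $\frac{d}{dt}\int_\Omega (u+v)\,dx = \int_\Omega (f+g)\,dx \le C\int_\Omega (u+v+1)\,dx$ by (\ref{M'}). Grönwall's inequality then gives $\|u(t,\cdot)\|_1 + \|v(t,\cdot)\|_1 \le M_1$ for a constant $M_1$ depending on $T_{\max}$ — but note this bound a priori degrades as $T_{\max}\to\infty$, so I must be careful: on the interval $(t_0,T_{\max})$ it is finite for each finite $T_{\max}$, which is exactly what Theorem~\ref{Max}'s hypothesis $u\in L^\infty((t_0,T_{\max}),L^1(\Omega))$ requires. (3) Now I would mimic the proof of Theorem~\ref{Max} for each component. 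Fix $\alpha = \|u(t_0,\cdot)\|_\infty$ and $\epsilon > \max\{1, M_1/(\alpha|\Omega|)\}$. Condition (\ref{1.5mu1}) says $f\cdot g \neq 0$ on the relevant range, hence $f\neq 0$ and $g\neq 0$ there. Running the argument of Theorem~\ref{Max} verbatim with reaction $f(t,x,u,v)$ (which is nonvanishing along the solution by (\ref{1.5mu1})) yields $u \le \epsilon\alpha$ on $(t_0,T_{\max})\times\Omega$; by continuity on $[0,t_0]\times\Omega$, $u$ is uniformly bounded on $(0,T_{\max})\times\Omega$. (4) Repeat for $v$ with $\beta = \|v(t_0,\cdot)\|_\infty$ and reaction $g$, which is likewise nonvanishing by (\ref{1.5mu1}); this gives a uniform bound on $v$.

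The main obstacle I anticipate is step (3): the Lemma as stated is about \emph{scalar} problem (\ref{Heat 1.1}) where $f=f(t,x,u)$ depends on the unknown only through $u$ itself, whereas here the reaction seen by the $u$-equation is $f(t,x,u(t,x),v(t,x))$ — it depends on $v$, i.e. on the ``other'' solution. One must check that the maximum-principle argument still goes through: when $w=u-\epsilon\alpha$ has three successive zeros, $u$ attains a value $c=\epsilon\alpha$ three times, producing an interior local min where $\frac{\partial u}{\partial t}=0\le\Delta u$, hence $f(t,x,u,v)\ge 0$ there, and an interior local max where $f(t,x,u,v)\le 0$; the strict maximum principle rules out the min being zero. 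Crucially $f\neq 0$ is guaranteed by (\ref{1.5mu1}) at every point with $u>0,v>0$, \emph{regardless of the value of $v$}, so the sign-change $f\ge 0$/$f\le 0$ combined with $f\neq 0$ is still the needed contradiction. Once this compatibility is verified, the two components are handled in parallel and uniform boundedness plus global existence (via Proposition~\ref{nu2}) follow as in the scalar case. A minor technical point to address is that continuous differentiability of $t\mapsto u(t,x)$ and $x\mapsto u(t,x)$ on $(0,T_{\max})$ — needed to invoke the maximum-principle proposition — holds because $(u,v)$ is classical there by Proposition~\ref{nu2}.
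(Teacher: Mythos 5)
Your proposal is correct and follows essentially the same route as the paper: add the two equations, integrate over $\Omega$ using the Neumann conditions and (\ref{M'}), apply Gr\"onwall to get the $L^{1}$ bound (\ref{interesting inq.}), and then invoke Theorem \ref{Max} componentwise, with (\ref{1.5mu1}) and (\ref{P'''}) supplying hypotheses (\ref{Heat 1.7}) and (\ref{P}) for the effective reactions $f(t,x,u,v(t,x))$ and $g(t,x,u(t,x),v)$. The only difference is cosmetic: the paper first splits into cases according to the signs of $f$ and $g$ and disposes of nonpositive reactions directly by the maximum principle, whereas you treat both components uniformly through Theorem \ref{Max}; your explicit check that the scalar lemma tolerates the dependence on the other component is a point the paper leaves implicit.
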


\begin{proof}
First if the two reactions $f$ and $g$ are both non positive the result is
well known and trivial: The maximum principle gives%
\begin{equation*}
\left\Vert u(t,.)\right\Vert _{\infty }\leq \left\Vert u_{0}\right\Vert
_{\infty }\text{ and }\left\Vert v(t,.)\right\Vert _{\infty }\leq \left\Vert
v_{0}\right\Vert _{\infty },\ \ \text{\ on }\left( t_{0},T_{\max }\right) ,
\end{equation*}%
and automatically we have global existence by using the continuity of the
solution on the remaining interval $\left( 0,t_{0}\right) $. Secondly if one
of the reaction is non positive (for example $f$) and the other is positive,
Then the maximum principle gives%
\begin{equation*}
\left\Vert u(t,.)\right\Vert _{\infty }\leq \left\Vert u\left(
t_{0},.\right) \right\Vert _{\infty }\text{ },\ \ \text{\ on }\left(
t_{0},T_{\max }\right) .
\end{equation*}%
To prove the boundedness of $v$ we proceed as follows: By adding the two
equations of system (\ref{2.D.C.R.1.1}), integrating over $\Omega $ and
using the inequality (\ref{M'}) with taking into the account the homogenous
boundary conditions (\ref{2.D.C.R.1.2}), we get for some positive constant $%
C $ the following%
\begin{equation*}
y^{\prime }\leq C(y+\left\vert \Omega \right\vert ),\ \ \text{\ on }\left(
0,T_{\max }\right) ,
\end{equation*}%
where%
\begin{equation*}
y(t)=\int_{\Omega }\left( u+v\right) dx,\ \ \text{\ on }\left( t_{0},T_{\max
}\right) .
\end{equation*}%
This gives%
\begin{equation*}
y(t)+\left\vert \Omega \right\vert \leq C^{\prime }e^{Ct},,\ \ \text{\ on }%
\left( t_{0},T_{\max }\right)
\end{equation*}%
and then%
\begin{equation}
\left\Vert u\right\Vert _{1}+\left\Vert v\right\Vert _{1}\leq C(T)<\infty ,\
\ \text{\ on }\left( t_{0},T_{\max }\right) .  \label{interesting inq.}
\end{equation}%
The component $v$ is $L_{1}-$uniformly bounded on $\left( t_{0},T_{\max
}\right) $, then\ from Theorem \ref{Max}\ it is uniformly bounded. Global
existence follows. The last case is when both of the reactions are positive,
then both $u$ and $v$ satisfy inequality like (\ref{interesting inq.}) via
the inequality (\ref{M'}) and another time Theorem \ref{Max} is applicable
to get global existence. This ends the proof of the Theorem.
\end{proof}

\begin{corollary}
Under conditions (\ref{1.5mu1}) and (\ref{M'}) then weak solutions of
problem (\ref{2.D.C.R.1.1})-(\ref{2.D.C.R.1.3}) with positive initial data
in $\mathbb{L}^{\infty }(\Omega )$ become classical, exist globally in time
and uniformly bounded.
\end{corollary}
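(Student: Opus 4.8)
The plan is to reduce the system to two scalar heat equations and invoke Theorem \ref{Max} through its weak-solution Corollary, following verbatim the case analysis in the proof of Theorem \ref{nu1}. First I would recall that, in the sense of \cite{Per-Lio}, \cite{Pie 2003} and \cite{Pie-Suz-Uma}, any weak solution $(u,v)$ of (\ref{2.D.C.R.1.1})--(\ref{2.D.C.R.1.3}) lies in $C\left( \left( 0,\infty \right) ,L^{1}(\Omega )\right) \times C\left( \left( 0,\infty \right) ,L^{1}(\Omega )\right) $, and that the sign conditions of this section together with standard comparison arguments for parabolic equations keep both components nonnegative; since the initial data are strictly positive one gets $u,v>0$ on $\left( 0,T_{\max }\right) \times \Omega $.

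Next I would establish the $L^{1}$ bound exactly as in Theorem \ref{nu1}: adding the two equations of (\ref{2.D.C.R.1.1}), testing the weak formulation with the constant function $1$ (legitimate because of the homogeneous Neumann condition (\ref{2.D.C.R.1.2})), and using the control-of-mass inequality (\ref{M'}), the quantity $y(t)=\int_{\Omega }\left( u(t,x)+v(t,x)\right) dx$ satisfies $y^{\prime }\leq C\left( y+\left\vert \Omega \right\vert \right) $ on $\left( t_{0},T_{\max }\right) $, whence by Gronwall's lemma $\left\Vert u(t,\cdot )\right\Vert _{1}+\left\Vert v(t,\cdot )\right\Vert _{1}\leq C(T)<\infty $ on $\left( t_{0},T_{\max }\right) $, the analogue of (\ref{interesting inq.}). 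Then I would split into the three cases of Theorem \ref{nu1}. If both $f$ and $g$ are nonpositive, the maximum principle bounds $u$ and $v$ directly. If exactly one of them, say $f$, is nonpositive, the maximum principle bounds $u$ by $\left\Vert u(t_{0},\cdot )\right\Vert _{\infty }$, while $v$ is $L^{1}$-uniformly bounded on $\left( t_{0},T_{\max }\right) $ and solves a scalar problem of the type (\ref{Heat 1.1}) whose reaction $(t,x)\mapsto g(t,x,u(t,x),v(t,x))$ is nonvanishing for $t\in \left( t_{0},T_{\max }\right) $ by (\ref{1.5mu1}). If both reactions are positive, both components are $L^{1}$-bounded and both reactions are nonvanishing. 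In the last two cases the weak-solution version of Theorem \ref{Max} (its Corollary) upgrades the relevant component(s) to classical, globally bounded solutions, continuity on $\left[ 0,t_{0}\right] \times \Omega $ extends the bound to $\left( 0,T_{\max }\right) \times \Omega $, and global existence follows from Proposition \ref{nu2}.

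One point that must be handled with care is that (\ref{1.5mu1}) genuinely forces the reaction \emph{evaluated along the solution} to have a fixed sign on $\left( t_{0},T_{\max }\right) \times \Omega $, which is what the scalar Theorem \ref{Max} needs through condition (\ref{Heat 1.7}): since $t\mapsto u(t,x)$ and $t\mapsto v(t,x)$ are continuous and positive and $f,g$ are continuous and cannot vanish on $\left( t_{0},T_{\max }\right) \times \Omega \times \left( 0,\infty \right) ^{2}$, the maps $(t,x)\mapsto f(t,x,u,v)$ and $(t,x)\mapsto g(t,x,u,v)$ are continuous and nonvanishing on the connected set $\left( t_{0},T_{\max }\right) \times \Omega $, hence of constant sign.

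The main obstacle I expect is not the computation but the functional-analytic bookkeeping in the weak setting: one has to make sure the weak solutions produced by \cite{Per-Lio}, \cite{Pie 2003}, \cite{Pie-Suz-Uma} are indeed $L^{1}$-valued continuous with the positivity claimed, that testing with the constant $1$ is admissible, and — crucially — that the implication ``$L^{1}$-bounded weak solution of a scalar equation with eventually one-signed reaction $\Rightarrow$ classical and uniformly bounded'' is supplied in full by Theorem \ref{Max} and its Corollary, so that no separate regularity bootstrap is needed here. Granting these inputs, the argument is a case analysis identical to that of Theorem \ref{nu1}.
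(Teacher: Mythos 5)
Your proposal is correct relative to the paper's framework and follows essentially the same route the paper intends: the corollary is obtained by noting that weak solutions lie in $C\left( \left( 0,\infty \right) ,L^{1}(\Omega )\right) $ (as in the scalar corollary after Theorem \ref{Max}) and then repeating the case analysis of Theorem \ref{nu1}, which is exactly what you do. Your added observation that (\ref{1.5mu1}) forces each reaction, evaluated along the continuous positive solution on the connected set $\left( t_{0},T_{\max }\right) \times \Omega $, to have constant sign is a useful explicit justification of a step the paper leaves implicit.
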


\begin{remark}
The results obtained by the authors in the interesting paper \cite{Pie-Sch
22} aren't in contradiction with those in this manuscript since the
following strict Mass-control they imposed%
\begin{equation}
f(t,x,u,v)+g(t,x,u,v)\leq 0,\ \ \text{for all }u>0,\ v>0,\ x\in \Omega ,\
t\in \left( 0,T_{\max }\right) ,  \label{M}
\end{equation}%
implies automatically each of the reactions should change sign many times.
To see this: Suppose for example the reaction $f$ doesn't change sign in the
interval $\left( 0,T_{\max }\right) $, then from condition (\ref{M}) we have%
\begin{equation*}
f(t,x,u,v)\leq 0,\ \ \text{for all }u>0,\ v>0,\ x\in \Omega ,\ t\in \left(
0,T_{\max }\right) ,
\end{equation*}%
which contradicts the blow up at finite time of $u$. Then we conclude that
the two reactions change sign on $\left( 0,T_{\max }\right) \times \Omega $.
Let $\overline{t}$ the greatest $t\in \left( 0,T_{\max }\right) $ such that,
for example $f$ satisfies%
\begin{equation*}
f(\overline{t},\overline{x},\overline{u},\overline{v})=0,
\end{equation*}%
for some $\left( \overline{x},\overline{u},\overline{v}\right) \in \Omega
\times 
\mathbb{R}
_{+}^{2}$, then we have%
\begin{equation*}
\left\{ 
\begin{array}{l}
f(t,x,u,v)<0,\ \ \ \text{for all }u>0,\ v>0,\ x\in \Omega ,\ t\in \left( 
\overline{t},T_{\max }\right) , \\ 
\text{or} \\ 
f(t,x,u,v)>0,\ \ \ \text{for all }u>0,\ v>0,\ x\in \Omega ,\ t\in \left( 
\overline{t},T_{\max }\right) .%
\end{array}%
\right.
\end{equation*}%
The first alternative contradicts the blow up at finite time of $u$. The
second one gives from condition (\ref{M}) 
\begin{equation*}
g(t,x,u,v)<0,\ \ \ \text{for all }u>0,\ v>0,\ x\in \Omega ,\ t\in \left( 
\overline{t},T_{\max }\right) ,
\end{equation*}%
which contradicts the blow up at finite time of $v$. We conclude that under
the condition (\ref{M}) the two reactions possess an infinity of zeros.
\end{remark}

\section{Conclusion}

\begin{conclusion}
This article deals with the global solution for a class of parabolic
equations, specifically the semilinear heat equation under the Neumann
boundary condition and applications to some reaction diffusion systems. It
is well known that in order to demonstrate the global existence in time of
the semilinear heat equation on a bounded domain $\Omega $ of $%
\mathbb{R}
^{n}$, it is sufficient to derive a uniform bound independent of the time of
the reaction $f(t;x;u)$ on the Lebesgue space $L^{p}\left( \Omega \right) $
for some $p>n/2$. The "regularizing effect" is the name of the principle,
which ignores the sign of the reaction in the heat equation. Additionally,
the maximal principle reveals the global existence of the solution when the
reaction is nonpositive. To our knowledge, there is no information on the
global existence of the solution when the reaction is positive on the
interval~of the local existence, unless some partial results occur under
conditions such as$\underset{a}{\overset{+\infty }{\int }}\frac{ds}{f\left(
s\right) }=+\infty $ \ for some positive constant $a$ with the restriction
reaction $f\left( s\right) $ doesn't depend $\left( t,x\right) $. In this
manuscript we show that if the reaction $f(t,x,u)$\ is strictly positive,
then weak solutions (i.e. solutions belonging to the Lebesgue space $%
L^{1}\left( \left( 0,T_{\max }\right) \times \Omega \right) $) become global
classical solutions. That is to prove global existence it suffices in
addition to the positivity of the reaction to suppose its uniform
boundedness on the Lebesgue space $L^{1}\left( \left( t_{0},T_{\max }\right)
\times \Omega \right) $ for some positive $t_{0}\in \left( 0,T_{\max
}\right) $. Then we present some applications to a class of reaction
diffusion system and prove the global existence of theirs positive weak
solutions under the unique condition%
\begin{equation*}
f(t,x,u,v).g(t,x,u,v)\neq 0,\ \ \text{for all }u>0,\ v>0,\ t\in \left(
t_{0},T\right) ,
\end{equation*}%
where $f$ and $g$ denote the reactions of the system.\ The proof is based on
the maximum principle.
\end{conclusion}

\end{document}